\documentclass{amsart}
\usepackage{graphicx}
\usepackage{subfigure}
\usepackage{caption}
\usepackage{amssymb}
\usepackage[matrix, arrow, curve]{xy}
\usepackage{enumerate}
\newtheorem{theorem}{Theorem}[section]
\newtheorem{lemma}[theorem]{Lemma}

\theoremstyle{definition}

\usepackage{pdfsync}

\usepackage{tikz}
\usetikzlibrary{shapes}
\usetikzlibrary{fit}
\usetikzlibrary{decorations.pathmorphing}
\usetikzlibrary{calc,arrows}
\theoremstyle{remark}

\numberwithin{equation}{section}

%    Absolute value notation

%    Blank box placeholder for figures (to avoid requiring any
%    particular graphics capabilities for printing this document).

 \DeclareMathSymbol{\vartriangle}{\mathord}{AMSa}{"4D}
\DeclareMathSymbol{\triangledown}{\mathord}{AMSa}{"4F}

\begin{document}
\title{Polynomials associated with  finite Markov chains.}
%    Information for first author
\author{Philippe Biane}
%    Address of record for the research reported here
\address{IGM, Universit\'e Paris Est, Marne-la-Vall\'ee, FRANCE
}
\email{biane@univ-mlv.fr}

\begin{abstract}
Given a finite Markov chain, 
we investigate the first minors of the transition matrix of a lifting of this Markov chain to covering trees. In a simple case we exhibit a nice factorisation of these minors, and we conjecture that it holds more generally.
\end{abstract}
\maketitle

\section{Introduction}
\label{sec:1}
The famous matrix-tree theorem of Kirchhoff gives a combinatorial formula for the invariant measure of a finite Markov chain in terms of covering trees of the state space of the chain. One can provide a probabilistic interpretation of Kirchhoff's formula by lifting the Markov chain to the set  covering trees of its state space, see e.g. \cite{AT} or \cite{LP}, \S 4.4. This yields a new Markov chain, whose transition matrix can be constructed from the transition matrix of the original Markov chain. In this paper, we investigate the first minors of this new matrix, which are polynomials in the entries of the original  transition matrix. We will see that in a simple case, that of a Markov chain evolving on a ring, these polynomials exhibit a remarkable factorisation. We expect that such factorisations hold in a much more general context. This paper is organized as follows: we start in section 2 by recalling some general facts about finite Markov chains and their invariant measure. In section 3 we describe how to lift the Markov chain to its set of covering trees. In section 4 we introduce a polynomial associated to the Markov chain, and show that in the case of a Markov chain with three states it has a nice factorisation.
We generalize this observation to the case of Markov chains on a ring in section 5, which contains the main result of the paper.

I would like to thank Jim Pitman for pointing out reference \cite{LP} to me.

\section{Finite Markov chains and invariant measures}

We start by recalling some well known facts about finite Markov chains.
\subsection{Transition matrix}
We consider a continuous time Markov chain $M$ on a finite set $X$.
Let $Q=(q_{ij})_{i,j\in X}$ be its matrix of transition rates:
$q_{ij}\geq 0$ if $i\ne j\in X$ and $\sum_jq_{ij}=0$ for all $i$.

\subsection{Invariant measure}
An invariant measure for $M$ (more exactly, for $Q$) is a nonzero vector  $\mu(i),i\in X$, with nonnegative entries such that
$\sum_i\mu(i)q_{ij}=0$ for all $j\in X$. An invariant measure always exists, it is unique up to a multiplicative constant if the chain is irreducible.

\subsection{Projection of a Markov chain}
Let $N$ be a Markov chain on a finite state space $Y$, with transition matrix $R=(r_{kl})_{k,l\in Y}$, and $p:Y\to M$ be a map such that, for all $i,j\in X$ and all $k\in Y$ such that $p(k)=i$, one has

\begin{equation}
q_{ij}=\sum_{l\in p^{-1}(j)}r_{kl}
\end{equation}
then $p(N)$ is a Markov chain on $X$ with transition rates $q_{ij}$. Furthermore, if $\nu$ is an invariant measure for $R$, then $\mu$ defined as

\begin{equation}\label{invmeas}
\mu(i)=\sum_{k\in p^{-1}(i)}\nu(k)
\end{equation}
is an invariant measure for $Q$.

\subsection{Oriented graph and covering trees.}
To the matrix $Q$ is associated a graph $(X,E)$ with $X$ as vertex set, and $E$ as edge set, such that there is  an edge from $i$ to $j$ if and only if $q_{ij}>0$. This graph is oriented, has no multiple edges, and no loops (edges which begin and end at the same  vertex). Let $i\in X$, a  {\sl covering tree of $(X,E)$, rooted at $i$} is  an oriented subgraph of $(X,E)$ which is a tree and such that,  for every  $j\in X$, there is a unique path from $j$ to $i$ in the graph  (paths are oriented).
The Markov chain is irreducible if and only if for all $i,j\in X$ there exists a path from $i$ to $j$ in the graph $(X,E)$.
If this is the case then for every vertex $i\in X$ there exists a covering tree rooted at $i$.

Figure 1 shows an oriented graph, together with a covering tree rooted at the shaded vertex (beware that a Markov chain corresponding to this graph is not irreducible).
\medskip

\begin{figure}
\centering
\begin{tikzpicture}[->,>=stealth',shorten >=1pt,auto,node distance=1cm,
  thick,main node/.style={circle,draw,font=\bf}]

  \node[main node] (1) {};
  \node[main node] (2) [left of=1] {};
 \node[main node] (3) [below   of=2] {};
  \node[main node] (4) [ right of=3] {};
\node[main node] (5) [ right of=4] {};
\node[main node] (6) [below  of=4] {};
\node[main node] (7) [below  of=3] {};

  \path[every node/.style={font=\sffamily\small}]
    (1)  edge (5) edge(2)
    (2) edge  (1) edge (3) edge (4)
     (3)  edge (4)
    (4) edge  (1) edge (2) edge (3) edge(6) 
(5) edge(1) edge(4)
(6) edge (4)
(7) edge(3)
;
\end{tikzpicture}
\hskip 2cm
\begin{tikzpicture}[->,>=stealth',shorten >=1pt,auto,node distance=1cm,
  thick,main node/.style={circle,draw,font=\bf}]

  \node[main node] (1) {};
  \node[main node] (2) [left of=1] {};
 \node[main node] (3) [below   of=2] {};
  \node[main node] (4) [ right of=3] {};
\node[main node, fill=blue!20] (5) [ right of=4] {};
\node[main node] (6) [below  of=4] {};
\node[main node] (7) [below  of=3] {};

  \path[every node/.style={font=\sffamily\small}]
    (1)  edge (5) 
    (2) edge  (1) 
     (3)  edge (4)
    (4) edge (2) 
(6) edge (4)
(7) edge(3)

;
\end{tikzpicture}
\caption{An oriented graph, and a covering rooted tree}
\end{figure}
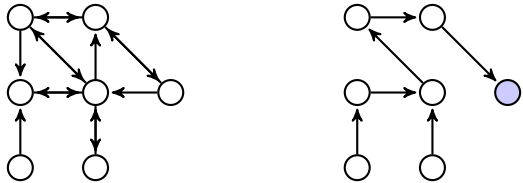

\subsection{Kirchhoff's matrix tree theorem}
We assume that the Markov chain is irreducible.
For  $i\in X$ let $Q^{(i)}$ be  the matrix obtained from $Q$ by deleting row and column $i$ and let $\mu(i)=\det(-Q^{(i)})$, then it is well known, and easy to see that $\mu$ is an invariant measure for $Q$. Indeed, if $Q^{(ij)}$ is obtained  by deleting row $i$ and column $j$, then  $\det(-Q^{(ij)})=\det(-Q^{(ii)})=\det(-Q^{(i)})$, since the sum of each line is 0, and
$\det(-Q)=\sum_i q_{ij}\det(-Q^{(ij)})=0$ for all $j$, by expanding the determinant along columns. That $\mu$ has positive entries follows from irreducibility and Kirchhoff's formula:
\begin{equation}\label{kirch}
\mu(i)=\sum_{t\in T_i} \pi(t)
\end{equation}
where the sum is over the set $T_i$ of oriented covering trees of $X$, rooted at $i$, and 
 $\pi(t)$ is the product of the $q_{kl}$ over all oriented edges  $(k,l)$ of the tree $t$.
See \cite{LP}, \S 4.
More generally, if $\{i_1,\ldots,i_k\}\subset X$, then Kirchhoff's formula also applies to the determinant of the matrix obtained from $Q$ by deleting columns and rows indexed by 
$i_1,\ldots,i_k$. This determinant is equal, up to a sign, to the sum over oriented covering forests, rooted at $i_1,\ldots,i_k$, of the product over edges of the forest.

\section{Lifting  the Markov chain to its covering trees}
\subsection{The lift}Notations are as in the preceding section furthermore
we assume that $Q$ is irreducible. The set of oriented covering rooted trees of $(X,E)$ is 
$T=\cup_{i\in X}T_i$. Let the map $p:T\to X$  assign to each tree $t$ its root (i.e. $p$ maps $T_i$  to $i$).
There exists an irreducible Markov chain on $T$ whose image by  $p$ is a Markov chain on $X$ with transition rates $Q$, and the vector  $(\pi(t))_{t\in T}$ is an invariant measure for this Markov chain. In particular by (\ref{invmeas}) the invariant measure $\pi$ projects by $p$ to the invariant measure $\mu$ and this construction provides a probabilistic interpretation of Kirchhoff's formula (\ref{kirch}).
 This Markov chain can be described by its transition rates $r_{st}, s,t\in T$.
 Let $s$ be a covering tree of $X$, rooted at $i$, and let $j\in X$ be such that 
 $q_{ij}>0$. There is a unique edge of $s$ coming out of $j$. Take out this edge from $s$  and then add the edge $(i,j)$.  One obtains a new oriented tree $t$, rooted at $j$ (see Figure 2 for an example). One puts then $r_{st}=q_{ij}$. For all pairs $s\ne t$ which are not obtained by this construction, one puts $r_{st}=0$. This defines a  unique matrix of transition rates 
$(r_{st})_{s,t\in T}$.

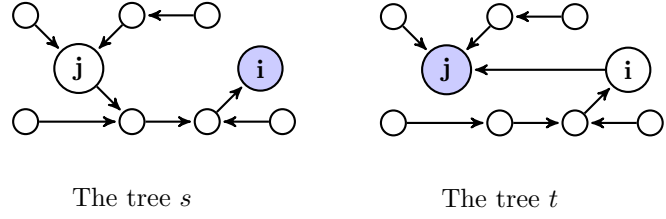
\begin{figure}
\centering
\begin{tikzpicture}[->,>=stealth',shorten >=1pt,auto,node distance=1cm,
  thick,main node/.style={circle,draw,font=\bf}]

  \node[main node] (1) {};
  \node[main node] (2) [below right of=1] {j};
 \node[main node] (3) [below  left of=2] {};
  \node[main node] (4) [below right  of=2] {};
\node[main node, ] (5) [ right of=4] {};
\node[main node] (6) [  right of=5] {};
\node[main node,fill=blue!20] (7) [above right  of=5] {i};
\node[main node] (8) [above right  of=2] {};
\node[main node] (9) [right  of=8] {};
\node[circle](10)[below of =4]{The tree $s$};

  \path[every node/.style={font=\sffamily\small}]
    (1)  edge (2) 
    (2) edge  (4) 
     (3)  edge (4)
    (4) edge (5) 
(5) edge (7)
(6) edge(5)
(8) edge (2)
(9) edge (8)
;
\end{tikzpicture}
\hskip 1cm
\begin{tikzpicture}[->,>=stealth',shorten >=1pt,auto,node distance=1cm,
  thick,main node/.style={circle,draw,font=\bf}]
 \node[main node] (1) {};
  \node[main node,fill=blue!20] (2) [below right of=1] {j};
 \node[main node] (3) [below  left of=2] {};
  \node[main node] (4) [below right  of=2] {};
\node[main node ] (5) [ right of=4] {};
\node[main node] (6) [  right of=5] {};
\node[main node] (7) [above right  of=5] {i};
\node[main node] (8) [above right  of=2] {};
\node[main node] (9) [right  of=8] {};
\node[circle](10)[below of =4]{The tree $t$};

  \path[every node/.style={font=\sffamily\small}]
    (1)  edge (2) 
    (7) edge  (2) 
     (3)  edge (4)
    (4) edge (5) 
(5) edge (7)
(6) edge(5)
(8) edge (2)
(9) edge (8)
;
\end{tikzpicture}
\caption{Lifting a transition between $i$ and $j$}
\end{figure}

It is clear that these transitions define a Markov chain which projects onto $M$ by the map $p$.

\begin{theorem}  The Markov chain with transition rates $R$ is irreducible, and the vector $\pi$ is an invariant measure for this Markov chain.
\end{theorem}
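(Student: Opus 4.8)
The plan is to treat the two assertions separately, proving invariance of $\pi$ by a direct verification of the balance equations $\sum_s\pi(s)r_{st}=0$, and proving irreducibility by translating transitions of the lifted chain into directed walks on $(X,E)$.

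For the invariance, I would fix a target tree $t$ rooted at $j$ and analyze the balance equation at $t$. First compute the diagonal term: the transitions leaving $t$ are indexed by the vertices $k$ with $q_{jk}>0$, each of rate $q_{jk}$, so the total exit rate is $\sum_{k\ne j}q_{jk}=-q_{jj}$ and hence $r_{tt}=q_{jj}$. The key step is to describe the predecessors of $t$. A transition $s\to t$ of rate $q_{ij}$ forces $(i,j)$ to be the outgoing edge of $i$ in $t$; conversely, for each child $i$ of $j$ in $t$ and each vertex $m$ with $q_{jm}>0$ lying in the branch $C_i$ of $t$ hanging below $i$, reattaching produces a tree $s$ whose edge set is that of $t$ with $(i,j)$ replaced by $(j,m)$. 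For such $s$ one has $\pi(s)=\pi(t)q_{jm}/q_{ij}$ and $r_{st}=q_{ij}$, so $\pi(s)r_{st}=\pi(t)q_{jm}$. Since the branches $C_i$, as $i$ runs over the children of $j$, partition $X\setminus\{j\}$, summing over all predecessors gives $\sum_{s\ne t}\pi(s)r_{st}=\pi(t)\sum_{m\ne j}q_{jm}=-q_{jj}\pi(t)$, which cancels $\pi(t)r_{tt}=q_{jj}\pi(t)$. This is the clean computational core of the proof.

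For irreducibility, I would set up the correspondence between transitions and walks: a sequence of allowed transitions $s_0\to s_1\to\cdots\to s_n$ corresponds to a directed walk $i_0\to i_1\to\cdots\to i_n$ in $(X,E)$, where $i_k$ is the root of $s_k$, and conversely every such walk lifts from a prescribed starting tree. Tracing the construction shows that the terminal tree $s_n$ is the \emph{last-exit tree} of the walk: for each vertex $v\ne i_n$ visited by the walk, its outgoing edge in $s_n$ points to the vertex entered immediately after the last departure from $v$, while unvisited vertices keep their edges from $s_0$. Thus, to connect an arbitrary $s$ to an arbitrary $t$ rooted at $j$, it suffices to exhibit a walk that starts at the root of $s$, visits every vertex (so the dependence on $s$ is erased), ends at $j$, and whose last-exit tree is exactly $t$. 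Strong connectivity of $(X,E)$, which holds since $Q$ is irreducible, provides the freedom needed to build such a walk.

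The main obstacle is precisely this realizability step. The edges of $t$ run from each vertex to its parent $\sigma_t(v)$, and may be traversed freely since they lie in $E$; but to descend toward the leaves one is forced to use arbitrary directed paths, which may pass through vertices already given their correct last-exit edge and spoil them. The point to get right is therefore the scheduling of excursions: one covers all vertices first and then finalizes the outgoing edges by running along tree paths $v\to\sigma_t(v)\to\cdots\to j$, ordered so that the last time the walk leaves each vertex it does so along its tree edge (revisiting and re-finalizing a vertex when a later descent has disturbed it). I would carry this out by an excursion and induction argument in the spirit of the Aldous--Broder construction, using strong connectivity to route the descents. Verifying that every target tree is attainable in this way is the delicate part, whereas the invariance computation above is routine.
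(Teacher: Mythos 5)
The paper itself offers no argument for this theorem (it defers entirely to \cite{AT}), so the comparison below is with what a complete proof requires. Your invariance computation is correct and essentially complete: identifying the predecessors of a tree $t$ rooted at $j$ with the pairs (child $i$ of $j$, vertex $m$ in the branch $C_i$ with $q_{jm}>0$), noting $\pi(s)r_{st}=\pi(t)q_{jm}$, and using that the branches $C_i$ partition $X\setminus\{j\}$ gives $\sum_s\pi(s)r_{st}=0$ cleanly; the only thing left implicit is the routine check that $t$ with $(i,j)$ replaced by $(j,m)$ is a spanning tree rooted at $i$ precisely when $m\in C_i$, and that distinct pairs $(i,m)$ give distinct predecessors.

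The genuine gap is in the irreducibility half, and you have located it yourself without closing it. You reduce irreducibility to the claim that every covering tree $t$ rooted at $j$ is the last-exit tree of some walk that starts at a prescribed vertex, covers $X$, and ends at $j$; you then describe a schedule of excursions that is supposed to realize $t$, concede that the connecting descents can overwrite already-finalized last-exit edges, and answer this with ``revisiting and re-finalizing,'' which is not an argument until you exhibit an ordering of the excursions, or a termination measure, guaranteeing that the process ends with every vertex's last exit along its edge of $t$. As written this step is a plan, not a proof. The claim is true, but you can bypass it entirely using two facts you already have. First, your own observation that a walk visiting every vertex produces a final tree depending only on the walk and not on the initial tree shows that any two trees $s,s'$ can reach a \emph{common} tree: move the root of $s$ to the root of $s'$ along any directed path, then apply one and the same covering walk to the resulting tree and to $s'$. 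Second, you have already proved that $\pi$, which is strictly positive on all of $T$, is invariant for $R$; for a finite chain this forces every state to be recurrent, hence every communicating class of the lifted chain is closed. Two distinct closed classes cannot reach a common state, so there is exactly one class and the lifted chain is irreducible. This derivation uses only what you have already established and avoids the delicate realizability lemma altogether.
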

The proof can be found in \cite{AT}.

\subsection{An example}\label{ex}
Let $X=\{1,2,3\}$ and 
$$Q=\begin{pmatrix}\lambda&a&w\\u&\mu&b\\c&v&\nu\end{pmatrix}$$
with $\lambda=-a-w$, $\mu=-b-u$, $\nu=-c-v$. We assume that $a,b,c,u,v,w>0$.
  The graph $(X,E)$ looks as follows:

\begin{center}
\begin{tikzpicture}[scale=.15]
 \draw   (0,0) circle(3);\draw  (0,0) node{${\bf 1}$};
\draw (20,0) circle(3);\draw  (20,0) node{${\bf 3}$};
\draw (10,17.3) circle(3);\draw  (10,17.3) node{$ {\bf 2}$};

\draw[->,thick] (3.8,-0.6) -- (16.2,-0.6);\draw  (10,-2) node{$w$};
\draw[<-,thick] (3.8,0.6) -- (16.2,0.6);\draw  (10,2) node{$c$};

\draw[->,thick] (18.6,3.4) -- (12.8,14.1);\draw  (16.5,10) node{$v$};
\draw[<-,thick] (17.3,2.8) -- (11.8,13.5);\draw  (13,8.5) node{$b$};

\draw[<-,thick] (1.4,3.4) -- (7.2,14.1);\draw  (3.5,10) node{$u$};
\draw[->,thick] (2.7,2.8) -- (8.2,13.5);\draw  (7,8.5) node{$a$};
\end{tikzpicture}

\end{center}

Each covering rooted tree $t$ can be indexed by the monomial $\pi(t)$. There are   nine such covering trees: first
$cu,uv,bc$ rooted at $1$, then $av,ac,vw$ rooted at $2$, and finally $uw,bw, ab $ rooted at $3$.
With this ordering of $T$, the transition matrix for the lifted Markov chain is
$$R=\left(\begin{array}{ccccccccccc}
\lambda&0&0&\ &0&a&0&\ &w&0&0\\
0&\lambda&0&\ &a&0&0&\ &w&0&0\\
0&0&\lambda&\ &0&a&0&\ &0&w&0\\
\
0&u&0&\ &\mu&0&0&\ &0&0&b\\
u&0&0&\ &0&\mu&0&\ &0&0&b\\
0&u&0&\ &0&0&\mu&\ &0&b&0\\
\\
c&0&0&\ &0&0&v&\ &\nu&0&0\\
0&0&c&\ &0&0&v&\ &0&\nu&0\\
0&0&c&\ &v&0&0&\ &0&0&\nu
\end{array}\right)$$
Figure 3 shows the oriented graph. We have shown, for each vertex, its projection onto $X$ (namely $\bf 1$, $\bf 2$, or $\bf 3$) and for each oriented edge, its weight ($a,b,c,u,v$ or $w$).

\begin{figure}[!h]
\begin{tikzpicture}[scale=.55]
    \draw  (0,0) circle(.8);\draw  (0,0) node{$uv\ {\bf 1}$};
\draw(16,0) circle(.8);\draw  (16,0) node{$uw\ {\bf 3}$};
\draw(8,13.9) circle(.8);\draw  (8,13.9) node{$vw\ {\bf 2}$};

\draw  (6,3) circle(.8);\draw  (6,3) node{$ab\ {\bf 3}$};
\draw(10,3) circle(.8);\draw  (10,3) node{$ac\ {\bf 2}$};
\draw(8,6.5) circle(.8);\draw  (8,6.5) node{$bc\ {\bf 1}$};

\draw(8,10.2) circle(.8);\draw  (8,10.2) node{$bw\ {\bf 3}$};
\draw  (3,1.5) circle(.8);\draw  (3,1.5) node{$av\ {\bf 2}$};
\draw(13,1.5) circle(.8);\draw  (13,1.5) node{$cu\ {\bf 1}$};

\draw[->,thick] (1.3,-0.4) -- (14.7,-0.4);\draw  (7.5,0.2) node{$w$};
\draw[->,thick] (15.5,1.3) -- (8.7,13);\draw  (12.5,8) node{$v$};
\draw[->,thick] (7.3,13) -- (0.5,1.3);\draw  (3.5,8) node{$u$};

\draw[<-,thick] (7.1,3) -- (8.9,3) ;\draw  (8,3.5) node{$b$};
\draw[->,thick]  (8.6,5.5) -- (9.7,3.9);\draw  (9.7,4.8) node{$a$};
\draw[<-,thick] (7.4,5.5) -- (6.3,3.9) ;\draw  (6.3,4.8) node{$c$};

\draw[->,thick] (0.7,0.6) -- (2.1,1.3) ;\draw  (1.1,1.2) node{$a$};
\draw[<-,thick] (0.8,0.3) -- (2.2,1) ;\draw  (1.8,0.5) node{$u$};
\draw[->,thick] (3.7,2.1) -- (5.1,2.8) ;\draw  (4.1,2.7) node{$b$};
\draw[<-,thick] (3.8,1.8)  --(5.2,2.5) ;\draw  (4.8,2) node{$v$};

\draw[->,thick] (15.3,0.6) -- (13.9,1.3) ;\draw  (14.9,1.2) node{$c$};
\draw[<-,thick] (15.2,0.3)  --(13.8,1) ;\draw  (14.2,0.5) node{$w$};

\draw[->,thick] (12.3,2.1) -- (10.9,2.8) ;\draw (11.9,2.7) node{$a$};
\draw[<-,thick] (12.2,1.8) -- (10.8,2.5) ;\draw  (11.2,2) node{$u$};

\draw[<-,thick] (7.85,7.5) -- (7.85,9.2) ;\draw  (8.5,8.3) node{$w$};
\draw[->,thick] (8.15,7.5) -- (8.15,9.2) ;\draw  (7.35,8.3) node{$c$};
\draw[<-,thick] (7.85,11.2)  --(7.85,12.9) ;\draw  (8.5,12) node{$v$};
\draw[->,thick] (8.15,11.2)  --(8.15,12.9) ;\draw  (7.35,12) node{$b$};

\end{tikzpicture}
\caption{The graph $T$ }
\end{figure}
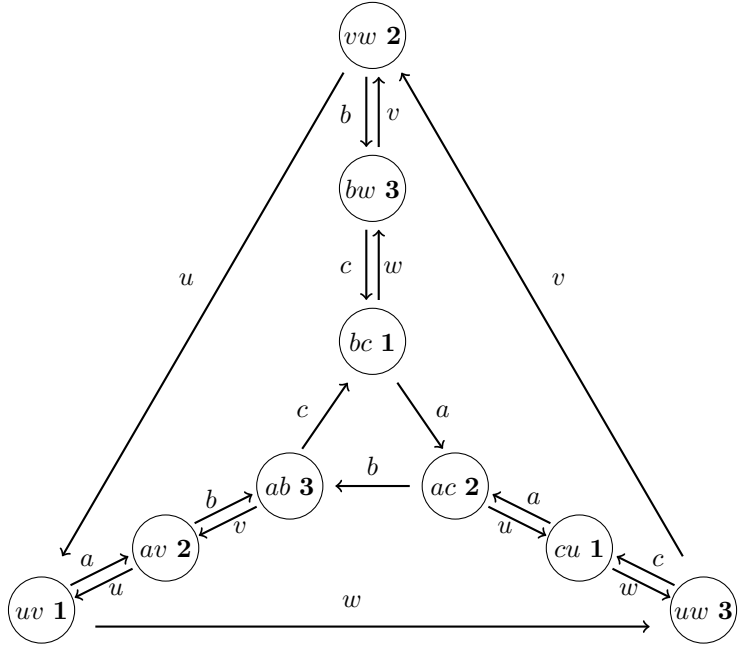

\medskip

\section{A polynomial associated to the Markov chain} 
\subsection{The polynomial}
We consider, as in the previous sections, an irreducible Markov chain on a finite set $X$ with transition matrix $Q$ and its canoncial lift to $T$, with transition matrix $R$.
For $t\in T$, consider the  matrix $R^{(t)}$ obtained from $R$
 by taking out row and column $t$, and let $\rho(t)=\det(-R^{(t)})$, then $\rho$ is an invariant measure for $R$, and gives a generating function for covering trees of the graph $T$.
 If we fix the graph $(X,E)$, then $\rho(t)$ is a polynomial in the variables $q_{ij}$, where we keep only the pairs $(i,j)$ forming an edge in $E$. 
Since $\pi$ and $\rho$ are  invariant measures of the lifted Markov chain, they are proportional so that there exists 
there exists a function,  $\Psi(q_{ij})$, independent of $t$, such that 
 for all $t\in T$, 
$$\rho(t)=\pi(t)\Psi$$
Actually it is not difficult to see that $\Psi(q_{ij})$ is a polynomial.
Indeed one has $\Psi=\rho(t)/\pi(t)$, and $\pi(t)$ is a monomial so that, by reducing, 
 $\Psi=P/m$ with $P$ a polynomial and $m$ a monomial prime with $P$. In particular, 
$\rho(t)=\pi(t)P/m$ is a polynomial for all $t$, hence $m$ divides $\pi(t)$ for all $t$. But the $\pi(t)$ have no common divisor, since a variable $q_{kl}$ cannot divide $\pi(t)$ is $t$ is rooted at $k$, therefore $m=1$.

\subsection{Some examples}
If $|X|=3$, with the notations of section \ref{ex}, one can compute 
\begin{eqnarray*}
\Psi(a,b,c,u,v,w)&
=&(bc+cu+uv)(av+ac+vw)(ab+bw+uw)\\
&=&\prod_{i\in X}\left(\sum_{t\in T_i}\pi(t)\right)
\end{eqnarray*}
so that $\Psi$ is the product of all symmetric rank two minors of the matrix $-Q$
(a {\sl  symmetric minor of rank $k$} of a matrix of size $n$, is   the determinant of a submatrix obtained by deleting $n-k$ rows and the $n-k$ columns with the same indices).
I have computed the polynomial $\Psi$ for various graphs with $4$ vertices and found in many cases that $\Psi$ can be written as a product of symmetric minors of the matrix $-Q$. I could not compute in the case of
$|X|=4$ and the graph $(X,E)$ is a complete graph, but by putting some of the variables equal to $1$ to make the determinant easier to compute,  the results suggest that the formula for $\Psi$ in this case should be
$$\Psi=m_2(Q)^3m_3(Q)^2$$
where  $m_k(Q)$  is the product of all symmetric  minors of rank $k$ of $-Q$.

Based on this small evidence it seems natural to conjecture that for any irreducible graph $(X,E)$ the polynomial $\Psi$ should be a product of symmetric minors of the matrix $-Q$. Which minors appear, and what are their exponents, should depend on the graph and encode some of its geometry.
By symmetry, 
in the case of a complete graph on $n$ vertices, the result should be a product $\prod_{k=1}^{n-1}m_k^{v_k^n}$ for some exponents $v_k^n$.
Guillaume Chapuy \cite{C} has done some further computations for $n=5$ and conjectured that 
$v_k^n=(k-1)(n-1)^{n-k-1}$.
One can check that, at least, this gives the correct degree. In general the degree of $\Psi$ is $|T|-n$, and in the case of a complete graph, $|T|=n^{n-1}$, moreover there are $\binom{n}{k}$ symmetric minors of rank $k$, which are polynomials of degree $k$, and 
$$\sum_{k=2}^{n-1}\binom{n}{k}k(k-1)(n-1)^{n-k-1}=n^{n-1}-n$$
as follows easily from the binomial formula.

In the following I obtain a result for the case where the graph is a ring: $X=\{1,2,\ldots,n\}$ and the edges are $(i,i\pm 1)$ (where $i\pm 1$ is taken modulo $n$).

\begin{theorem}\label{th} If $(X,E)$ is a ring of size $n\geq 3$, then $\Psi$ is the product of the symmetric minors of size $n-1$:
$$\Psi=m_{n-1}(Q)$$
\end{theorem}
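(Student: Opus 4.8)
The plan is to fix the ring variables $\alpha_i=q_{i,i+1}$ and $\beta_i=q_{i,i-1}$ ($i\in\mathbb{Z}_n$), make the combinatorics of $T$ completely explicit, and then identify $\Psi$ by a divisibility–plus–degree argument. First I would set up coordinates on $T$. On a ring every covering tree is obtained by deleting exactly one undirected edge and orienting the resulting path toward its root, so a tree is specified by a pair $(i,m)$: its root $i$ and its \emph{gap} $m$ (the deleted edge $\{m,m+1\}$), and thus $T\cong\mathbb{Z}_n\times\mathbb{Z}_n$. Tracing through the lifting rule shows that a transition moves the root to a neighbour and leaves the gap unchanged, \emph{unless} the root crosses the gap, in which case the gap advances by one in the direction of motion. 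Indexing $-R$ by root (outer block) and gap (inner block) this says $-R=(-Q)\otimes I_n+\Delta$, a cyclic block–tridiagonal matrix whose blocks are scalar multiples of the identity corrected by the rank–one gap–shift $\Delta$ coming from the crossing moves. This structure lemma is routine but is the backbone of everything that follows.

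Next I would reformulate the goal. A symmetric minor of rank $n-1$ of $-Q$ is $\det(-Q^{(i)})=\mu(i)$, so $m_{n-1}(Q)=\prod_i\mu(i)$, and since $\Psi=\rho(t)/\pi(t)$ for every $t$ the theorem is equivalent to $\rho(t)=\pi(t)\prod_i\mu(i)$. I would prove this by showing that $\Psi$ is a constant multiple of $\prod_i\mu(i)$ and then pinning the constant. The degree is already right: $\deg\Psi=|T|-n=n(n-1)=\sum_i\deg\mu(i)$. A short argument shows each $\mu(i)$ is irreducible and that distinct $\mu(i)$ are pairwise coprime (they are sums of the monomials of the trees rooted at $i$, and the variable supports for different roots are incompatible). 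Hence it suffices to show $\mu(i)\mid\Psi$ for every $i$; by the cyclic symmetry $\alpha_\bullet\mapsto\alpha_{\bullet+1}$, $\beta_\bullet\mapsto\beta_{\bullet+1}$, which permutes $T$ and the $\mu(i)$, it is even enough to treat a single $i$. Granting this, $\Psi=c\prod_i\mu(i)$, and I would fix $c=1$ by comparing the coefficient of the pure–clockwise monomial $\prod_k\alpha_k^{\,n-1}$: on the right it occurs with coefficient $1$ (only the all–clockwise tree rooted at each vertex is $\beta$–free), and setting $\beta\equiv0$ in $\rho(t)$ for an all–clockwise $t$ leaves exactly one surviving in–tree of $T$ (each vertex is forced onto its unique ``up'' edge), whence $\Psi|_{\beta=0}=\rho(t)|_{\beta=0}/\pi(t)=\prod_k\alpha_k^{\,n-1}$ and $c=1$.

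The crux — and the step I expect to be the main obstacle — is the divisibility $\mu(i)\mid\Psi$, equivalently that $\rho(t)=\det(-R^{(t)})$ vanishes on the hypersurface $\{\mu(i)=\det(-Q^{(i)})=0\}$. Here the tensor decomposition pays off: given $v\in\ker(-Q^{(i)})$, extended by $v_i=0$, the lift $V(i',m):=v_{i'}$ (constant along each fibre $p^{-1}(i')$) satisfies $(RV)(i',m)=(Qv)_{i'}$, so $V$ is $R$–harmonic at \emph{every} vertex off the single fibre $p^{-1}(i)$, where it takes the common value $\kappa=(Qv)_i$. If $\kappa=0$ then $V$ would be globally harmonic, hence constant, forcing $v\equiv0$; so the entire difficulty is concentrated in correcting $V$ on the $n$–element fibre $p^{-1}(i)$ so as to absorb the constant residual $\kappa$ and turn $V$ into a genuine kernel vector of $-R^{(t)}$ (vanishing at $t$). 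Concretely I would Schur–complement $-R^{(t)}$ onto the fibre $p^{-1}(i)$ and argue that the resulting $n\times n$ effective matrix has determinant divisible by $\det(-Q^{(i)})$, using that the lifted chain projects to $Q$, so that summing the fibre equations reproduces the killed generator $-Q^{(i)}$. Controlling this reduction — in particular showing the vanishing is exactly first order, matching the exponent $1$ of $\mu(i)$ in $\prod_i\mu(i)$ — is where the real work lies; the rank–one nature of the gap–shift blocks $\Delta$ and the explicit birth–death structure of each fibre are what make me optimistic it can be pushed through.
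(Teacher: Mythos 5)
Your scaffolding is sound and closely parallels the paper: the parametrisation of $T$ by (root, gap) pairs with $|T|=n^2$, the description of how transitions move the root and shift the gap, the reduction of the theorem to ``$\det(-Q^{(i)})$ divides $\Psi$ for each $i$'' via irreducibility, coprimality and the degree count $\deg\Psi=n^2-n=\sum_i\deg\mu(i)$, and the normalisation $c=1$ (your specialisation $\beta\equiv 0$ is a clean equivalent of the paper's extraction of the coefficient of $q_{n1}^{n-1}\prod_i q_{i,i+1}^n$, and it does work: with only clockwise edges the functional graph on $T$ has the single cycle $[1,1]\to[2,2]\to\cdots\to[n,n]\to[1,1]$, so the all-clockwise roots are exactly the ones admitting a surviving in-tree). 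The irreducibility of $\mu(i)$ does need an argument specific to the ring --- the paper uses that $\mu(i)$ is linear in $q_{i-1,i}$ with monomial constant term --- but that is minor.

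The genuine gap is the step you yourself flag as the crux: the divisibility $\mu(i)\mid\Psi$, i.e.\ the vanishing of $\det(-R^{(t)})$ on $\{\det(-Q^{(i)})=0\}$. Your harmonic-lift computation only shows that the fibre-constant vector $V$ satisfies $-R^{(t)}V=-\kappa\,\mathbf{1}_{p^{-1}(i)\setminus\{t\}}$ with $\kappa\neq 0$; from this one can conclude either that $-R^{(t)}$ is singular \emph{or} that $V$ equals $-(-R^{(t)})^{-1}(\kappa\mathbf{1})$, and nothing you say rules out the second alternative. The proposed repair --- Schur-complementing onto the fibre $p^{-1}(i)$ and arguing that ``summing the fibre equations reproduces $-Q^{(i)}$'' --- does not obviously go through: the projection identity $\sum_{l\in p^{-1}(j)}r_{kl}=q_{p(k),j}$ controls row sums of $R$ grouped by \emph{target} fibre, whereas the Schur complement onto a \emph{source} fibre involves the rational matrix $BD^{-1}C$, which is not constrained by that identity in any evident way. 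So the central claim remains unproved. For comparison, the paper establishes it purely combinatorially: it isolates the subgraphs $G=\{[1,n],[2,1],\ldots,[n,1],[1,1]\}$ and $H=\{[1,n],[2,n],\ldots,[n,n],[1,1]\}$ of $T$, shows (Lemma 5.2) that covering forests of $G$ (or $H$) rooted at $\{[1,n],[1,1]\}$ biject weight-preservingly with covering trees of $X$ rooted at $1$, and then stratifies the spanning trees of $T$ rooted at $[1,1]$ by their restriction to the complement of $G\cup H$ together with a three-way case split on the edge leaving $[1,n]$; each stratum's generating function is a multiple of $\det(-Q^{(1)})$. If you want to salvage a linear-algebraic route you would need a new idea here (for instance, exhibiting an explicit left or right kernel vector of $-R^{(t)}$ over the function field of the hypersurface), since the dimension count alone --- $n-1$ free fibre values against roughly $2n$ constraints at the neighbours of the fibre --- shows the naive correction of $V$ is overdetermined.
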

The proof of Theorem \ref{th}, which is the main result of this paper, occupies the next section.
\section{Proof of  Theorem \ref{th}}
In this section, $(X,E)$ denotes a ring, namely, $X=\{1,2,\ldots,n\}$ and the edges are $(i,i\pm 1)$ (here and in the sequel $i\pm 1$ is always taken modulo $n$). I will illustrate this with $n=4$, as in Figure 4.

\medskip

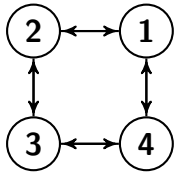
\begin{figure}
\begin{tikzpicture}[->,>=stealth',shorten >=1pt,auto,node distance=1.5cm,
  thick,main node/.style={circle,draw,font=\sffamily\Large\bfseries}]

  \node[main node] (1) {1};
  \node[main node] (2) [left of=1] {2};
 \node[main node] (3) [below   of=2] {3};
  \node[main node] (4) [below of=1] {4};

  \path[every node/.style={font=\sffamily\small}]
    (1) edge    (2) edge (4)
    (2) edge (1) edge (3)
    (3) edge  (2) edge (4)
    (4)         edge  (1) edge (3);
\end{tikzpicture}
\caption{The ring $(X,E)$ with $n=4$}
\end{figure}

\medskip
\subsection{Structure of the graph $T$}
For each  pair $(i,j)\in X^2$ there exists a unique covering tree of $(X,E)$, rooted at $i$, which has no edge between $j$ and $j+1$. Let us  denote this covering rooted tree by $[i,j]$.
For example, if $n=4$ here are the trees denoted by, respectively, $[2,3]$ and $[3,3]$ (here and in the sequel the roots are shaded):

\medskip

\begin{center}
\begin{tikzpicture}[->,>=stealth',shorten >=1pt,auto,node distance=1.5cm,
  thick,main node/.style={circle,draw,font=\sffamily\Large\bfseries}]

  \node[main node] (1) {1};
  \node[main node, fill=blue!20] (2) [left of=1] {2};
 \node[main node] (3) [below   of=2] {3};
  \node[main node] (4) [below of=1] {4};

  \path[every node/.style={font=\sffamily\small}]
    (1) edge    (2)
    (3) edge  (2)
    (4)         edge  (1);
\end{tikzpicture}
\hskip 1.5cm
\begin{tikzpicture}[->,>=stealth',shorten >=1pt,auto,node distance=1.5cm,
  thick,main node/.style={circle,draw,font=\sffamily\Large\bfseries}]

  \node[main node] (1) {1};
  \node[main node] (2) [left of=1] {2};
 \node[main node, fill=blue!20] (3) [below   of=2] {3};
  \node[main node] (4) [below of=1] {4};

  \path[every node/.style={font=\sffamily\small}]
    (1) edge    (2)
    (2) edge  (3)
    (4)         edge  (1);
\end{tikzpicture}
\end{center}

\medskip
It is easy to check that these are all covering rooted trees of $(X,E)$, in 
particular  $|T|=n^2$. Let us now describe the structure of the graph on $T$ induced by the lifting of the Markov chain.

First consider 
the trees indexed by the pairs $[i,i]$. The trees $[i,i]$ and $[i+1,i+1]$ are connected by an edge labelled $q_{i,i+1}$ e.g.

\medskip

\begin{center}
\begin{tikzpicture}[->,>=stealth',shorten >=1pt,auto,node distance=1.5cm,
  thick,main node/.style={circle,draw,font=\sffamily\Large\bfseries}]

  \node[main node] (1) {1};
  \node[main node] (2) [left of=1] {2};
 \node[main node, fill=blue!20] (3) [below   of=2] {3};
  \node[main node] (4) [below of=1] {4};

  \path[every node/.style={font=\sffamily\small}]
    (1) edge    (2)
    (2) edge  (3)
    (4)         edge  (1);
\end{tikzpicture}
$\qquad\begin{matrix} \bf q_{34}\\ \longrightarrow\\ \\ \\ \\ \\ \\ \\ \end{matrix}\qquad$
\begin{tikzpicture}[->,>=stealth',shorten >=1pt,auto,node distance=1.5cm,
  thick,main node/.style={circle,draw,font=\sffamily\Large\bfseries}]

  \node[main node] (1) {1};
  \node[main node] (2) [left of=1] {2};
 \node[main node] (3) [below   of=2] {3};
  \node[main node, fill=blue!20] (4) [below of=1] {4};

  \path[every node/.style={font=\sffamily\small}]
    (1) edge    (2)
    (2) edge  (3)
    (3)         edge  (4);
\end{tikzpicture}
\end{center}

These trees  form an oriented ring in $T$:

\medskip

\begin{center}
\begin{tikzpicture}[->,>=stealth',shorten >=1pt,auto,node distance=2cm,
  thick,main node/.style={circle,draw,font=\bf}]

  \node[main node] (1) {[1,1]};
  \node[main node] (2) [ left of=1] {[2,2]};
 \node[main node] (3) [below   of=2] {[3,3]};
  \node[main node] (4) [below  of=1] {[4,4]};

  \path[every node/.style={font=\sffamily\small}]
    (1) edge    (2)
    (2) edge  (3)
     (3) edge  (4)
    (4) edge  (1);
\end{tikzpicture}
\end{center}

\medskip

The trees indexed by pairs $[i,i-1]$ are connected by edges labelled $q_{i,i-1}$:

\medskip

\begin{center}
\begin{tikzpicture}[->,>=stealth',shorten >=1pt,auto,node distance=1.5cm,
  thick,main node/.style={circle,draw,font=\sffamily\Large\bfseries}]

  \node[main node] (1) {1};
  \node[main node] (2) [left of=1] {2};
 \node[main node, fill=blue!20] (3) [below   of=2] {3};
  \node[main node] (4) [below of=1] {4};

  \path[every node/.style={font=\sffamily\small}]
    (1) edge    (4)
    (2) edge  (1)
    (4)         edge  (3);
\end{tikzpicture}
$\qquad\begin{matrix}\bf q_{32}\\ \longrightarrow\\ \\ \\ \\ \\ \\ \\\end{matrix}\qquad$
\begin{tikzpicture}[->,>=stealth',shorten >=1pt,auto,node distance=1.5cm,
  thick,main node/.style={circle,draw,font=\sffamily\Large\bfseries}]

  \node[main node] (1) {1};
  \node[main node, fill=blue!20] (2) [left of=1] {2};
 \node[main node] (3) [below   of=2] {3};
  \node[main node] (4) [below of=1] {4};

  \path[every node/.style={font=\sffamily\small}]
    (1) edge    (4)
    (4) edge  (3)
    (3)         edge  (2);
\end{tikzpicture}
\end{center}

They form another oriented ring:

\medskip
\begin{center}
\begin{tikzpicture}[->,>=stealth',shorten >=1pt,auto,node distance=2cm,
  thick,main node/.style={circle,draw,font=\bf}]

  \node[main node] (1) {[1,4]};
  \node[main node] (2) [ left of=1] {[2,1]};
 \node[main node] (3) [below   of=2] {[3,2]};
  \node[main node] (4) [below  of=1] {[4,3]};

  \path[every node/.style={font=\sffamily\small}]
    (1) edge    (4)
    (2) edge  (1)
     (3) edge  (2)
    (4) edge  (3);
\end{tikzpicture}
\end{center}

\medskip

There are also edges in the two directions between $[i,j]$ and $[i+1,j]$, labelled by $q_{i,i+1}$ and $q_{i+1,i}$:

\medskip

\begin{center}
\begin{tikzpicture}[->,>=stealth',shorten >=1pt,auto,node distance=1.5cm,
  thick,main node/.style={circle,draw,font=\sffamily\Large\bfseries}]

  \node[main node] (1) {1};
  \node[main node,fill=blue!20] (2) [left of=1] {2};
 \node[main node, ] (3) [below   of=2] {3};
  \node[main node] (4) [below of=1] {4};

  \path[every node/.style={font=\sffamily\small}]
    (1) edge    (2)
    (4) edge  (3)
    (3)         edge  (2);
\end{tikzpicture}
$\qquad\begin{matrix} \bf  q_{23}\\ \longrightarrow\\ \longleftarrow\\\bf q_{32} \\ \\ \\ \\ \\ \\ \end{matrix}\qquad$
\begin{tikzpicture}[->,>=stealth',shorten >=1pt,auto,node distance=1.5cm,
  thick,main node/.style={circle,draw,font=\sffamily\Large\bfseries}]

  \node[main node] (1) {1};
  \node[main node] (2) [left of=1] {2};
 \node[main node, fill=blue!20] (3) [below   of=2] {3};
  \node[main node] (4) [below of=1] {4};

  \path[every node/.style={font=\sffamily\small}]
    (1) edge    (2)
    (2) edge  (3)
    (4)         edge  (3);
\end{tikzpicture}
\end{center}

These form lines of length $n$:

\begin{center}
\begin{tikzpicture}[->,>=stealth',shorten >=1pt,auto,node distance=2cm,
  thick,main node/.style={circle,draw,font=\sffamily\small\bfseries}]

  \node[main node] (1) {$[2,1]$};
  \node[main node] (2) [right of=1] {$[3,1]$};
\node[main node] (3) [right of=2]{$[4,1]$};
  \node[main node] (5) [ right of=3] {$[1,1]$};

  \path[every node/.style={font=\sffamily\small}]
    (1) edge   (2)
        
    (2) edge  (1)
        edge  (3)
    (3) edge  (2)
        edge  (5)
    
     (5)edge (3)
      ;

\end{tikzpicture}
\end{center}

One can represent the graph $T$ by putting two concentric oriented rings  of size $n$, with  opposite orientations,
and joining the vertices of the rings by sequences of vertices connected by double edges, see Figure 3 for $n=3$ and Figure 5 for $n=4$:

\begin{figure}
\begin{tikzpicture}[->,>=stealth',shorten >=1pt,auto,node distance=1cm,
  thick,main node/.style={circle,draw,font=\bf}]

  \node[main node] (1) {};
  \node[main node] (2) [below right of=1] {};
 \node[main node] (3) [below  right of=2] {};
  \node[main node] (4) [below right of=3] {};
\node[main node] (5) [ right of=4] {};
\node[main node] (6) [above right of=5] {};
\node[main node] (7) [above right of=6] {};
\node[main node] (8) [above right of=7] {};
\node[main node] (9) [below of=4] {};
\node[main node] (10) [below left of=9] {};
\node[main node] (11) [below left of=10] {};
\node[main node] (12) [below left of=11] {};
\node[main node] (13) [below of=5] {};
\node[main node] (14) [below right of=13] {};
\node[main node] (15) [below right of=14] {};
\node[main node] (16) [below right of=15] {};

  \path[every node/.style={font=\sffamily\small}]
    (1) edge    (2)
    (1)edge    (8) 
    (2) edge  (1)
    (2) edge  (3)
     (3) edge  (2)
     (3) edge  (4)
    (4) edge  (3)
     (4) edge  (9)
   (5) edge  (4)
    (5) edge  (6)
    (6) edge  (5)
    (6) edge  (7)
    (7) edge  (6)
    (7) edge  (8)
     (8) edge  (7)
    (8) edge  (16)
    (9) edge  (10)
    (9) edge  (13)
    (10) edge  (9)
    (10) edge  (11)
    (11) edge  (10)
     (11) edge  (12)
     (12) edge  (11)
     (12) edge  (1)
      (13) edge  (14)
       (13) edge  (5)
      (14) edge  (13)
      (14) edge  (15)
        (15) edge  (16)
        (15) edge  (14)
       (16) edge  (15)
       (16) edge  (12);

\end{tikzpicture}
\caption{The graph of $T$ for $n=4$}
\end{figure}
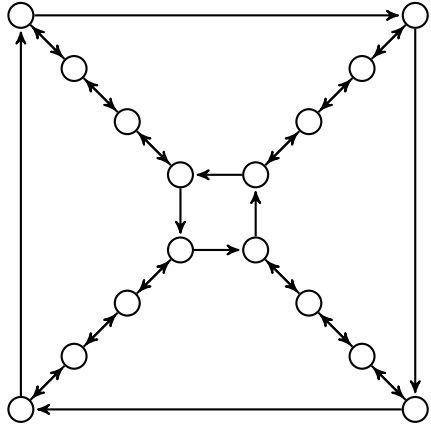

\subsection{The symmetric $n-1$ minors of $-Q$}
We will use the following lemma.

\begin{lemma}The symmetric $n-1$ minors of $-Q$ are prime polynomials.
\end{lemma}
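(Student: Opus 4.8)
The plan is to identify these minors with the Kirchhoff weights $\mu(r)$ and then prove irreducibility of a single, well-chosen linear slice. First I would record what the symmetric $(n-1)$-minors actually are: deleting row and column $r$ from $-Q$ gives $\det(-Q^{(r)}) = \mu(r) = \sum_{t\in T_r}\pi(t)$ by Kirchhoff's formula. For the ring, $T_r$ consists of exactly the $n$ trees $[r,j]$, so $\mu(r)$ is a sum of $n$ squarefree monomials, each a product of $n-1$ distinct edge-weights. Writing $x_k = q_{k,k+1}$ and $y_k = q_{k+1,k}$, and taking $r=1$ without loss of generality (the cyclic symmetry $i\mapsto i+1$ permutes the $n$ minors and relabels the variables, so one case is representative), the trees form the chain $M_s = y_1\cdots y_s\, x_{s+2}\cdots x_n$ for $s=0,1,\ldots,n-1$, with $M_0 = x_2\cdots x_n$ (all clockwise) and $M_{n-1}=y_1\cdots y_{n-1}$ (all counterclockwise). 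In particular $\mu(1)$ is multilinear and involves every edge-weight except the two out-edges $x_1,y_n$ of the root.

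Next I would regard $\mu(1)$ as a polynomial of degree one in the single variable $y_1=q_{2,1}$, say $\mu(1)=B\,y_1+C$. Here $C$ collects the monomials with no factor $y_1$; since only $M_0$ lacks $y_1$, one gets the single monomial $C=M_0=x_2x_3\cdots x_n$. The coefficient $B=\sum_{s\ge 1}M_s/y_1$ is the sum of the remaining $n-1$ trees with the factor $y_1$ removed.

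The crux is to show $\gcd(B,C)=1$ in the polynomial ring. Since $C$ is a squarefree monomial, any common factor would have to be one of the variables $x_2,\ldots,x_n$ dividing every monomial of $B$. But $B$ contains the monomial $M_{n-1}/y_1=y_2\cdots y_{n-1}$, which uses no $x$-variable at all; equivalently, the purely clockwise tree $M_0$ and the purely counterclockwise tree $M_{n-1}$ share no edge. Hence no $x_k$ divides $B$, and $\gcd(B,C)=1$. A degree-one polynomial $Bt+C$ over a UFD with coprime coefficients is irreducible: in any factorization one factor lies in the coefficient ring and therefore divides both $B$ and $C$, hence is a unit. Thus $\mu(1)$ is irreducible, that is, prime, and the same argument applied at each root settles all symmetric $(n-1)$-minors.

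The main obstacle is really just bookkeeping the monomial structure cleanly enough to see that the gcd is trivial; once the chain $M_0,\ldots,M_{n-1}$ is written down, the coprimality is transparent from the two extreme trees. An alternative I would keep in reserve is an ``interaction graph'' method: $\mu$ is multilinear, so in any factorization the variables split into two disjoint blocks, and one checks that two variables must lie in the same factor whenever the $2\times2$ criterion $\mu_{uv}\mu_1\ne\mu_u\mu_v$ holds; connectivity of the resulting graph on the chain again forces all variables into one block and yields irreducibility. The linear-slice argument is shorter, so I would present that one.
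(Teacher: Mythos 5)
Your proof is correct and takes essentially the same approach as the paper: both isolate a single in-edge of the root, observe that the coefficient of degree zero in that variable is the unique covering tree avoiding the corresponding edge (hence a single monomial), and conclude that any nontrivial factorisation would force a variable to divide the whole polynomial, i.e.\ an edge common to all covering trees rooted there, which is impossible. The only cosmetic differences are your choice of distinguished variable ($q_{2,1}$ rather than the paper's $q_{i-1,i}$) and that you make the coprimality of the two coefficients explicit by exhibiting the all-counterclockwise tree.
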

\begin{proof}
Let $i\in Q$, then $\det(-Q^{(i)})$, the symmetric $n-1$ minor corresponding to $i$ is a polynomial with degree at most one in each variable. More precisely, using  Kirchhoff's formula this minor is the generating function of covering trees rooted at $i$ and  it can be written as 
$\alpha q_{i-1,i}+\beta$ where $\alpha$ and $\beta$ are  polynomials of degree $0$ in $q_{i-1,i}$. Moreover   $\beta$ is a monomial since there exists a unique covering tree of $(X,E)$ rooted at $i$ which does not contain the edge $(i-1,i)$. It follows that any nontrivial factorisation of this polynomial can be written as

\begin{equation}\label{factor}
\alpha q_{i-1,i}+\beta=(\gamma q_{i-1,i}+\delta)\eta
\end{equation}

 where $\gamma,\delta,\eta$ have degree $0$ in $q_{i-1,i}$ and  $\eta\delta=\beta$. In particular, $\eta$ is a nontrivial monomial, therefore there exists a variable $q_{kl}$ which divides $\alpha q_{i-1,i}+\beta$, and this means that the edge $(k,l)$ belongs to all covering trees rooted at $i$. Clearly this is not possible, therefore a nontrivial factorisation such as (\ref{factor}) does not exist, and the symmetric minor is a prime polynomial.

\end{proof}

\subsection{A preliminary lemma}
Consider the restriction of the graph $T$ to the sets of vertices $$G=\{[1,n],[2,1],[3,1],\ldots,[n,1],[1,1]\}$$ 

\medskip

\begin{center}
\begin{tikzpicture}[->,>=stealth',shorten >=1pt,auto,node distance=2cm,
  thick,main node/.style={circle,draw,font=\sffamily\small\bfseries}]

  \node[main node] (1) {$[2,1]$};
  \node[main node] (2) [right of=1] {$[3,1]$};
\node[circle] (3) [right of=2]{$\ldots$};
 %\node[main node] (4) [right  of=3] {$[n,1]$};
  \node[main node,fill=blue!20] (5) [ right of=3] {$[1,1]$};
  \node[main node,fill=blue!20] (6) [above of=1]{$[1,n]$};

  \path[every node/.style={font=\sffamily\small}]
    (1) edge   (2)
        edge  (6)
    (2) edge  (1)
        edge  (3)
    (3) edge  (2)
        edge  (5)
    
     (5)edge (3)
      ;

\end{tikzpicture}
\end{center}

\medskip

and $$H=
\{[1,n],[2,n],[3,n],\ldots,[n,n],[1,1]\}$$

\medskip

\begin{center}
\begin{tikzpicture}[->,>=stealth',shorten >=1pt,auto,node distance=2cm,
  thick,main node/.style={circle,draw,font=\sffamily\small\bfseries}]

  \node[main node,fill=blue!20] (6){$[1,n]$};
\node[main node] (7) [right of=6]{$[2,n]$};
\node[circle] (8) [right of=7]{$\ldots$};
%\node[main node] (9) [right of=8]{$[n-1,n]$};
\node[main node] (10) [right of=8]{$[n,n]$};

\node[main node,fill=blue!20] (11) [below of=10]{$[1,1]$};

  \path[every node/.style={font=\sffamily\small}]
    
      (6) edge (7)
          
     (7) edge  (8)
  edge (6)
(8) edge (7)
    edge (10)
(10)  edge (8)
  edge(11)

;

\end{tikzpicture}
\end{center}
We will need the following lemma.
\begin{lemma}\label{covfor}

The generating function of the set of covering forests  
of $G$, rooted at $[1,n]$ and $[1,1]$ is equal to $\det(-Q^{(1)})$, the generating function for the set of covering  trees of $X$, rooted at 1.
The same is true with $H$ instead of $G$.

\end{lemma}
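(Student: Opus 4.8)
The plan is to exploit the fact that, as the figures show, the induced graph on $G$ is a \emph{path} whose two endpoints are exactly the prescribed roots $[1,n]$ and $[1,1]$; the same holds for $H$. I will treat $G$ in detail and then note that $H$ is handled identically. First I would record the structure precisely. Writing the vertices of $G$ in path order as
$$P_0 = [1,n],\ P_1 = [2,1],\ P_2=[3,1],\ \ldots,\ P_{n-1}=[n,1],\ P_n = [1,1],$$
the lifting rules assign to each interior edge $\{P_{k-1},P_k\}=\{[k,1],[k+1,1]\}$ the two labels $q_{k,k+1}$ (orientation $P_{k-1}\to P_k$) and $q_{k+1,k}$ (orientation $P_k\to P_{k-1}$), while the single boundary edge $\{P_0,P_1\}=\{[1,n],[2,1]\}$ carries only the orientation $[2,1]\to[1,n]$ with label $q_{2,1}$, and the boundary edge $\{P_{n-1},P_n\}=\{[n,1],[1,1]\}$ carries $[n,1]\to[1,1]$ with label $q_{n,1}$. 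In particular every edge of the path is oriented toward $P_0$ with a label of the form $q_{i,i-1}$ and toward $P_n$ with a label $q_{i,i+1}$.

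Next I would enumerate the covering forests. Since $G$ is a path on $n+1$ vertices with roots at its two ends, a covering forest rooted at $\{P_0,P_n\}$ has $n-1$ edges, hence omits exactly one of the $n$ path-edges; conversely, omitting a single edge $\{P_{m-1},P_m\}$ and orienting the left block toward $P_0$ and the right block toward $P_n$ produces a unique valid covering forest. Here it is important that the two boundary edges are oriented toward their roots, so the one-directional edges cause no obstruction. Thus there are exactly $n$ covering forests, indexed by $m\in\{1,\ldots,n\}$.

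The key step is to compute the weight $W_m$ of the forest omitting $\{P_{m-1},P_m\}$ and to identify it. Reading off the labels of the left-oriented edges $P_k\to P_{k-1}$ $(1\le k\le m-1)$ and the right-oriented edges $P_k\to P_{k+1}$ $(m\le k\le n-1)$, one finds
$$W_m=\left(\prod_{i=2}^{m}q_{i,i-1}\right)\left(\prod_{i=m+1}^{n}q_{i,i+1}\right),$$
with indices taken mod $n$ and empty products equal to $1$. On the other hand, the covering trees of the ring $X$ rooted at $1$ are exactly the trees $[1,j]$ for $j=1,\ldots,n$, and a direct inspection of $[1,j]$ (vertices $2,\dots,j$ point toward $1$ through decreasing indices, vertices $j+1,\dots,n$ through increasing indices) gives $\pi([1,j])=\left(\prod_{i=2}^{j}q_{i,i-1}\right)\left(\prod_{i=j+1}^{n}q_{i,i+1}\right)$. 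Hence $W_m=\pi([1,m])$, so the assignment sending the forest omitting $\{P_{m-1},P_m\}$ to the covering tree $[1,m]$ is a weight-preserving bijection. Summing over $m$ and invoking Kirchhoff's formula,
$$\sum_{m=1}^n W_m=\sum_{j=1}^n \pi([1,j])=\det(-Q^{(1)}),$$
which is the assertion for $G$. For $H$ I would repeat the argument verbatim with path order $[1,n],[2,n],\ldots,[n,n],[1,1]$: the interior edges $\{[i,n],[i+1,n]\}$ again carry the labels $q_{i,i+1}$ and $q_{i+1,i}$, the terminal edge $[n,n]\to[1,1]$ carries $q_{n,1}$, and the same computation yields $W'_m=\pi([1,m])$ and the identical conclusion.

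The only real obstacle is the bookkeeping of labels: one must match the lift-induced edge labels of $G$ (resp.\ $H$) to the ring variables $q_{i,i\pm1}$ with the correct orientation, and in particular verify that the boundary edges $[2,1]\to[1,n]$ and $[n,n]\to[1,1]$ point \emph{toward} the roots, so that each omitted-edge configuration genuinely extends to a covering forest. Once the labels are pinned down, both the identity $W_m=\pi([1,m])$ and the resulting bijection with covering trees rooted at $1$ are immediate, and Kirchhoff's formula closes the argument.
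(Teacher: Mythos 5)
Your proof is correct and follows essentially the same route as the paper: both rest on a weight-preserving bijection between the $n$ covering forests of the path $G$ (resp.\ $H$) rooted at its two endpoints and the $n$ covering trees $[1,j]$ of the ring rooted at $1$. The paper obtains this bijection in one line by observing that the projection $p$ restricted to $G$ identifies the two structures label-for-label, whereas you construct the same bijection explicitly by enumerating the forests and matching weights $W_m=\pi([1,m])$.
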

\begin{proof}
One can check easily that the restriction of the projection $p$ to $G$ induces a bijection between the  covering forests  of $G$ rooted at $[1,n]$ and $[1,1]$ and the covering trees of $X$ rooted at $1$ (observe that $[1,n]$ and $[1,1]$ both project to $1$), and this bijection preserves the labels of the edges. The same is true for $H$ and the lemma follows.
\end{proof}

\subsection{}
We will now prove that the symmetric minor $\det(-Q^{(i)})$ divides the symmetric minor
$\det(-R^{([i,i])})$. By symmetry it is enough to prove this for $i=1$. By Kirchhoff's formula, we know that the polynomial $\det(-R^{([1,1])})$
is the generating polynomial of the covering trees  of $T$ rooted at vertex $[1,1]$.

Let  $K=G\cup H$ and let $L=T\setminus K$.
The part of the graph $T$ containing   $K$ looks like

\medskip

\begin{center}
\begin{tikzpicture}[->,>=stealth',shorten >=1pt,auto,node distance=2cm,
  thick,main node/.style={circle,draw,font=\sffamily\small\bfseries}]

  \node[main node] (1) {$[2,1]$};
  \node[main node] (2) [right of=1] {$[3,1]$};
\node[circle] (3) [right of=2]{$\ldots$};
 %\node[main node] (4) [right  of=3] {$[n,1]$};
  \node[main node] (5) [ right of=3] {$[1,1]$};
  \node[main node] (6) [above of=1]{$[1,n]$};
\node[main node] (7) [right of=6]{$[2,n]$};
\node[circle] (8) [right of=7]{$\ldots$};
%\node[main node] (9) [right of=8]{$[n-1,n]$};
\node[main node] (10) [right of=8]{$[n,n]$};
\node[circle] (11) [above of=6]{};
\node[circle] (12) [above of=10]{};
\node[circle] (13) [below of=1]{};
\node[circle] (14) [below of=5]{};
  \path[every node/.style={font=\sffamily\small}]
    (1) edge   (2)
        edge  (6)
    (2) edge  (1)
        edge  (3)
    (3) edge  (2)
        edge  (5)
    
     (5)edge (3)
        edge (14)
      (6) edge (7)
          edge (11)
     (7) edge  (8)
  edge (6)
(8) edge (7)
    edge (10)
(10)  edge (8)
      edge (5)
(12) edge (10)
(13) edge (1)

;

\end{tikzpicture}
\end{center}

\medskip

 Observe that 
the only way one can enter the set $K$ by a path coming from $L$ is through the vertices $[2,1]$ or $[n,n]$.
 Let now $\tau$ be a covering tree of 
$T$, rooted at $[1,1]$. If we consider the set of vertices $L\cup\{[2,1],[n,n]\}$ together with the edges of $\tau$ coming out of elements of  $L$, we obtain two disjoint trees, rooted respectively at $[n,n]$ and $[2,1]$. Let us now fix such a pair of  trees $A$ and $B$, and consider the set of covering trees $\tau$ of $T$, rooted at $[1,1]$, which induce the pair $(A,B)$. There are three possibilities for  the edge coming out of $[1,n]$ in such a tree:

\medskip

$i)$ it connects to $[2,n]$

$ii)$ it connects to $[n,n-1]$ which belongs to $A$

$iii)$ it connects to $[n,n-1]$ which belongs to $B$.

\medskip

If we are in the first case then the restriction of the tree to $G$ forms 
 a covering forest of $G$, rooted at $[1,n]$ and $[1,1]$. Furthermore  any such forest can occur, independently of the trees $A$ and $B$. It follows that the generating function of trees in case $i)$ is a multiple of the generating function of such covering forests, which is $\det(-Q^{(1)})$ by Lemma \ref{covfor}.

In case $ii)$ the same argument as in $i)$ can be applied, so we conclude again that the generating function of such trees is a multiple of $\det(-Q^{(1)})$.

Finally in case $iii)$ the edge $([2,1],[1,n])$ cannot belong to the tree, but  a similar reasoning, this time with $H$ instead of $G$, shows that  
the generating function of such trees is a again multiple of $\det(-Q^{(1)})$.

From this, summing over all three cases, and all pairs $(A,B)$ we conclude that $\det(-R^{([1,1])})$, the generating function of the set of covering trees of $T$, rooted at $[1,1]$, is a multiple of  $\det(-Q^{(1)})$. Since $\det(-R^{([1,1])})=\pi([1,1])\Psi$ and $\pi([1,1])$ is a monomial which is prime with $\det(-Q^{(1)})$ it follows that $\det(-Q^{(1)})$ divides the polynomial $\Psi$. By symmetry, this is true of all the $\det(-Q^{(i)})$, for $i\in X$  and since these are distinct prime polynomials, we conclude that $\Psi$ is a multiple of  $m_{n-1}=\prod_i\det(-Q^{(i)})$. The degree of the polynomial $\det(-R^{([1,1])})$ is $n^2-1$, the degree of   $m_{n-1}$ is $n(n-1)$ and the degree of $\pi([1,1])$ is $n-1$. It follows that $\Psi$ and $m_{n-1}$ are proportional.

 In order to find the constant of proportionality, we consider  the generating function of the covering trees of $T$, rooted at $[n,n]$. This generating function is 
$\det(-R^{([n,n])})=\pi([n,n])\Psi$. I claim that the 
coefficient of the monomial 
\begin{equation}\label{monom}
q_{n1}^{n-1}\prod_{i=1}^{n-1} q_{i,i+1}^{n}
\end{equation}
 in $\det(-R^{([n,n])})$ is 1. Indeed for each $i\leq n$ there are exactly $n$ edges in $T$ which are labelled $q_{i,i+1}$, and one of the edges labelled $q_{n1} $ goes out of $[n,n]$ so it cannot belong to a tree rooted at $[n,n]$, therefore there exists at most one covering tree rooted at $[n,n]$ whose product over labelled edges is equal to (\ref{monom}).  On the other hand, one can check that, taking the graph formed with all these edges, one obtains a covering tree rooted at $[n,n]$, see e.g. Figure 6 for the case of $n=4$.

\begin{figure}[!h]
\begin{tikzpicture}[->,>=stealth',shorten >=1pt,auto,node distance=1cm,
  thick,main node/.style={circle,draw,font=\bf}]

  \node[main node] (1) {};
  \node[main node] (2) [below right of=1] {};
 \node[main node] (3) [below  right of=2] {};
  \node[main node] (4) [below right of=3] {};
\node[main node] (5) [ right of=4] {};
\node[main node] (6) [above right of=5] {};
\node[main node] (7) [above right of=6] {};
\node[main node] (8) [above right of=7] {};
\node[main node] (9) [below of=4] {};
\node[main node] (10) [below left of=9] {};
\node[main node] (11) [below left of=10] {};
\node[main node] (12) [below left of=11] {};
\node[main node,fill=blue!20] (13) [below of=5] {};
\node[main node] (14) [below right of=13] {};
\node[main node] (15) [below right of=14] {};
\node[main node] (16) [below right of=15] {};

  \path[every node/.style={font=\sffamily\small}]
    (1) edge    (2)
    (2) edge  (3)
     (3) edge  (4)
     (4) edge  (9)
    (8) edge  (7)
     (7) edge  (6)
   (5) edge  (4)
    (12) edge  (11)
    (6) edge  (5)
    (11) edge  (10)
    (10) edge  (9)
    (9) edge  (13)
     (16) edge  (15)
    (15) edge  (14)
    (14) edge  (13);

\end{tikzpicture}
\caption{The covering tree for $n=4$}
\end{figure}
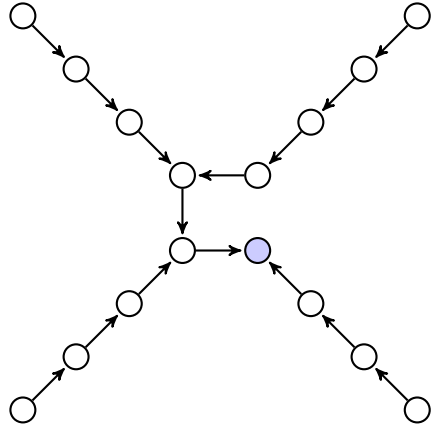

It remains now to check that the coefficient of $\pi([n,n])\prod_i\det(-Q^{(i)})$ is 1. This follows from the fact that for each $i$ there exists a unique covering tree of $X$ rooted at $i$, whose labels are all of the form $q_{k,k+1}$. Taking the product over these trees one recovers the product (\ref{monom}). 

This completes the proof of Theorem \ref{th}.\qed

\subsection{Final remark}
If we look at formula
$$\det(-R^{([n,n])})=\pi([n,n])\prod_{i=1}^n\det(-Q^{(i)})$$
there is a combinatorial significance for both sides of the equality. The left hand sides is the generating function for covering trees of $T$ rooted a $[n,n]$ whereas the right hand side is the  generating function of  the $n$-tuples of rooted covering trees of $(X,E)$  rooted at $1,2,\ldots,n$. It would be interesting to tranform our proof of this formula into a bijective proof by exhibiting a bijection between these two sets which respects the weights. This could shed some light on the general case.


\begin{thebibliography}{10}



\bibitem{AT}
V. Anantharam, P. Tsoucas, 
A proof of the Markov chain tree theorem.
Statist. Probab. Lett. 8 (1989), no. 2, 189--192. 

\bibitem{C} G. Chapuy, private communication, october 2014.

\bibitem{LP} R. Lyons, Y. Peres, Probability on trees and networks. (2014)\\
http://pages.iu.edu/~rdlyons/prbtree/prbtree.html
\end{thebibliography}
\end{document}